 \newtheorem{theorem}{Theorem}[section]
\theoremstyle{remark}
\title{On the stability of Forward in Time and Centred in Space
  (FTCS) scheme for scalar hyperbolic equation\footnote{Carried out work is
    financially supported by DST India through project \#
    SR/FTP/MS-015/2011}}
\author{Ritesh Kumar Dubey\footnote{\texttt{mail-to: riteshkd@gmail.com}}, \\
Research Institute, SRM University, Tamilnadu\\ India}
\date{}%00-00-0000}
\begin{document}
\maketitle
%% use optional labels to link authors explicitly to addresses:
%% \author[label1,label2]{Ritesh Kumar Dubey,Biswarup Biswas }
%% \address[label1]{mail-to: riteshkd@gmail.com; riteshkumar.d@res.srmuniv.ac.in}
%% \address[label2]{Research Institute, SRM University, Taminadu, India-603203}

\begin{abstract}
It is well known that Forward Time and Centred in Space (FTCS) scheme
for scalar Hyperbolic Conservation Law (HCL) is unconditionally
unstable. The main contribution of this work to show that FTCS is
conditionally stable for HCL. A new approach is used to give bounds on the
initial data profile by transforming FTCS into two point convex
combination scheme. Numerical results are given in support of
the claim.
\end{abstract}

{\bf keyword}
Numerical oscillations; Von-Neumann stability; smoothness parameter; finite difference schemes; hyperbolic equations.

\section{Introduction}\label{sec1}
We consider for the simplicity, the linear transport problem,
\begin{eqnarray}
\frac{\partial }{\partial t}u(x,t) + a \frac{\partial}{\partial x}u(x,t)&=&0, a\in \mathbb{R} \label{eq1sec1} \\
u_{0}(x)&=&f(x) 
\end{eqnarray}
where $u=u(x,t)$ is a scalar field transported by flow of constant
velocity $a$. The exact solution of linear transport equation \label{eqsec1} is given by
\begin{equation}
u(x,t)=u_{0}(x\pm at). \label{eq2sec1}
\end{equation}
Seemingly simple, the linear transport equation (\ref{eq1sec1}) has
played a crucial role in the development of the numerical methods for
general hyperbolic conservation laws.  It is the closed form solution (\ref{eq2sec1})
of (\ref{eq1sec1}) which pave the  way to analyze any new numerical
scheme devised for more complex non-linear hyperbolic problems for its
convergence and stability.  Considering the solution (\ref{eq2sec1}),
it is natural to seek a numerical scheme which is stable in the sense
that it does not induce numerical oscillations. Therefore, the notion
of stability of a numerical schemes evolves around the induced
spurious numerical oscillations.  In order to elaborate it, we
consider a uniform grid with the spatial width $h$, time step $k$ and
denote the discrete mesh point $(x_{j},t_{n})$ by $x_{j}=jh,\; j\in
\mathbb{Z}$ and $t_{n}=nk,\; k\in \mathbb{N}$.

In the seminal work Courant-Friedrichs and Levy shown that for the
convergence a difference scheme must contains the physical domain of
dependence of partial differential equation \cite{CFL1928}. In other
words, they gave a necessary condition on ratio of the spatial and
time discretization step for the convergence of the difference scheme
known as CFL number. Since then, the notion of CFL condition has been
an indispensable tool for defining the stability of numerical schemes.
It is known that the linear Von-Neumann stability analysis of a
numerical scheme for (\ref{eq1sec1}) deduces a stability condition on
the CFL number $C=|a|\lambda,\; \lambda=\frac{k}{h}$.  Moreover,
almost all notion of non-linear stability such as upwind range condition, monotone stability
\cite{Lax1954,Majda1980,Sanders1983}, positivity preserving
\cite{Jameson95positiveschemes,LaxLiu1998}, total variation stability
\cite{harten1983} heavily relies on CFL number to devise any new
stable scheme e.g., TVD schemes in \cite{Rkd1,Jerez2014292,rider},
essentially non-oscillatory (ENO) schemes \cite{harten1987, shu1988}, weighted
essentially non-oscillatory schemes \cite{Zahran200937}.  Apart
from the notion of stability, in a recent paper CFL number is
exploited for improved approximation by the flux limiters based scheme
in \cite{Kemm}. In this work, we are interested in the
stability of the following FTCS scheme. It is obtained by the
discretization of (\ref{eq1sec1}) by replacing the time derivative
with a forward difference, and the space derivative with a centred
difference formula i.e.,
\begin{equation}
u_{j}^{n+1}=u_{j}^{n}-\frac{a\,k}{2h}\left(u_{j+1}^{n}-u_{j-1}^{n}\right). \label{ftcs1}
\end{equation}
Above three point centred FTCS scheme (\ref{ftcs1}) seems to be a correct and natural choice as
the spatial discretization in FTCS does not violets the physical
domain of dependence of (\ref{eq1sec1}) given in \cite{CFL1928}. It is
also interesting to note that the FTCS (\ref{ftcs1}) and the centered
Lax-Wendroff(LxW) scheme \cite{LxW1960} shares the same spatial
stencil of grid points. Note that for the CFL number
$C\leq 1$ the three point centred LxW scheme is linearly stable
\cite{Toro}. Contrary to the expectation, the solution
obtained by FTCS scheme (\ref{ftcs1}) is diverging and and induced
oscillations grow exponentially no matter how small the time step is
compared to the space step. The classical Von-Neumann stability
analysis also shows that FTCS (\ref{ftcs1}) is unconditionally
unstable. Moreover, FTCS does not satisfies any of the notion of non-linear stability mentioned above see \cite{Laney}.
\section{Non-oscillatory condition on FTCS}
The above, unconditional unstability FTCS scheme (\ref{ftcs1}) is
surprising as it can be observed that for smooth initial data, such as
sinusoidal wave, the induced oscillations by FTCS does not grow
immediately. Moreover, the magnitude and occurrence of induced
oscillations can be controlled by choosing small CFL number
$C=\lambda |a|$, see Figure \ref{Fig1}. On the other hand when
applied on discontinuous data, FTCS introduces strong oscillations
immediately see Figure \ref{Fig2}(a).  These observations have been the motivation for the
present study on the dependence of induced oscillations by FTCS scheme
on data type and CFL number. In the following result, a classification
on data type is given in terms of ratio of consecutive gradients which
is defined as
\begin{equation}
\displaystyle
\theta_{j}^{n}=\left\{\begin{array}{ccc}\frac{\Delta_{-}u_{j}^{n}}{\Delta_{+}u_{j}^{n}}
& \mbox{if} &a\geq0,\\ \frac{\Delta_{+}u_{j}^{n}}{\Delta_{-}u_{j}^{n}} &
\mbox{if} &a<0,\end{array}\right.
\label{smtprm}
\end{equation}
$\Delta_{\pm}u_{j}^{n}=\pm u_{j \pm\, 1}\mp\, u_{j}$. We assume that
the time and space step ratio satisfies the CFL condition $a\lambda \leq
1$ and denote the sign of $x$ by $sgn(x)$.
\begin{theorem}\label{thm1}
In the solution data region where $\displaystyle \theta_{j}^{n} \in
\mathcal{S}_{FTCS}= \left(\infty, -1\right]\cup\left[\frac{sgn(a)|a|\lambda}{2-sgn(a)|a|\lambda},\infty\right)$, the FTCS
scheme (\ref{ftcs1}) is stable and does not introduce numerical oscillations
provided CFL number $a\lambda\leq 1$.
\end{theorem}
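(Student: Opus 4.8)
The plan is to exploit the data-dependent reformulation announced in the abstract: rewrite the three-point FTCS update as an equivalent \emph{two-point} scheme whose single coefficient is a function of the gradient ratio $\theta_j^n$, and then read off exactly when that coefficient produces a convex combination. Since the definition (\ref{smtprm}) of $\theta_j^n$ already switches the roles of the forward and backward differences according to $sgn(a)$, the two cases $a\ge 0$ and $a<0$ are mirror images of one another; I would therefore fix $a\ge 0$ without loss of generality, prove the statement there, and recover the case $a<0$ by the same computation with $\Delta_+$ and $\Delta_-$ interchanged.

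For $a\ge 0$ write the CFL number as $a\lambda\ge 0$ and use $\Delta_- u_j^n=\theta_j^n\,\Delta_+ u_j^n$ to collapse the central difference onto the upwind (backward) difference:
\[
u_{j+1}^n-u_{j-1}^n=\Delta_+u_j^n+\Delta_-u_j^n=\Big(1+\tfrac{1}{\theta_j^n}\Big)\Delta_-u_j^n .
\]
Substituting into (\ref{ftcs1}) turns FTCS into
\[
u_j^{n+1}=u_j^n-C_j^n\,(u_j^n-u_{j-1}^n),\qquad C_j^n=\frac{a\lambda}{2}\Big(1+\frac{1}{\theta_j^n}\Big),
\]
that is, $u_j^{n+1}=(1-C_j^n)u_j^n+C_j^n u_{j-1}^n$. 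This is the promised two-point scheme, and the whole theorem reduces to identifying the set of $\theta_j^n$ for which it is a genuine convex combination, namely $0\le C_j^n\le 1$.

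The next step is a short sign analysis of $C_j^n=\frac{a\lambda}{2}\cdot\frac{\theta_j^n+1}{\theta_j^n}$. When $\theta_j^n>0$ positivity is automatic, and $C_j^n\le 1$ rearranges (using $a\lambda\le 1<2$, so $2-a\lambda>0$) to $\theta_j^n\ge \frac{a\lambda}{2-a\lambda}$, the endpoint giving $C_j^n=1$. When $\theta_j^n<0$, $C_j^n\ge 0$ forces $\theta_j^n+1\le 0$, i.e. $\theta_j^n\le -1$, and on that range $C_j^n\le 1$ holds automatically, the endpoint $\theta_j^n=-1$ giving $C_j^n=0$. Intersecting, $0\le C_j^n\le 1$ holds exactly on $(-\infty,-1]\cup[\frac{a\lambda}{2-a\lambda},\infty)=\mathcal{S}_{FTCS}$. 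With the coefficient constrained to $[0,1]$, $u_j^{n+1}$ lies between $u_{j-1}^n$ and $u_j^n$, so in this data region the update obeys a local maximum principle: it creates no new extrema and cannot amplify existing ones, which is precisely the non-oscillatory, $\ell^\infty$-bounded (stable) behaviour claimed, in contrast to the global Von-Neumann verdict.

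I expect the main obstacle to be conceptual rather than computational. The reformulation and the sign bookkeeping are routine once (\ref{smtprm}) is used to fix the denominator, but one must handle the degenerate locations where $\Delta_+u_j^n=0$ or $\Delta_-u_j^n=0$ (so $\theta_j^n$ is undefined and the two-point form is vacuous or singular), and one must be careful that the role of the CFL hypothesis $a\lambda\le 1$ is only to guarantee $2-a\lambda>0$ and to keep the threshold $\frac{a\lambda}{2-a\lambda}$ inside $(0,1]$. The remaining subtlety is justifying that a purely pointwise two-point convex combination, valid only on the region $\{\theta_j^n\in\mathcal{S}_{FTCS}\}$, really delivers the advertised stability; this is exactly the local-maximum-principle reading of the convex-combination property, and it is where the argument must be stated with some care rather than merely computed.
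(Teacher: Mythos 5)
Your proposal is correct and follows essentially the same route as the paper: you rewrite FTCS as the two-point upwind update $u_j^{n+1}=(1-C_j^n)\,u_j^n+C_j^n\,u_{j-1}^n$ with $C_j^n=\frac{a\lambda}{2}\left(1+\frac{1}{\theta_j^n}\right)$, which is exactly the paper's convex-combination form (your $C_j^n$ is the paper's coefficient $\beta^{+}$, and your requirement $0\le C_j^n\le 1$ is the paper's pair of inequalities on $\alpha^{+},\beta^{+}$ with $\alpha^{+}+\beta^{+}=1$), recovering the same region $\left(-\infty,-1\right]\cup\left[\frac{a\lambda}{2-a\lambda},\infty\right)$. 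Your deviations are cosmetic but welcome: you run the sign analysis directly in $\theta_j^n$ instead of in $\Delta_{+}u_j^n/\Delta_{-}u_j^n$ followed by the paper's somewhat delicate ``inversion'' step, you explicitly flag the degenerate locations $\Delta_{\pm}u_j^n=0$ and the local-maximum-principle reading of the convex combination (both left implicit in the paper), and your mirror-image treatment of $a<0$ reproduces the threshold $\frac{-a\lambda}{2+a\lambda}=\frac{|a|\lambda}{2-|a|\lambda}$ obtained in the paper's own $a<0$ case.
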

\begin{proof}
Rewrite FTCS (\ref{ftcs1}) in the form
\begin{equation}
u_{j}^{n+1}=u_{j}^{n}-\frac{a\lambda}{2}\left(\Delta_{+}u_{j}^{n}+\Delta_{-}u_{j}^{n}\right). \label{ftcs2}
\end{equation}
\begin{itemize}
\item {\bf Let $a>0$:}  FTCS (\ref{ftcs2}) can be written as,
\begin{equation}
u_{j}^{n+1}=u_{j}^{n}-\frac{a\lambda}{2}\left(\frac{\Delta_{+}u_{j}^{n}}{\Delta_{-}u_{j}^{n}}+1\right) \Delta_{-}u_{j}^{n}. \label{ftcs3}
\end{equation}
which can be rewritten in upwind stencil form as,
\begin{equation}
u_{j}^{n+1}= \alpha^{+}\, u_{j}^{n} + \beta^{+}\, u_{j-1}^{n}. \label{ftcs4}
\end{equation}
where the coefficients $
\displaystyle \alpha^{+}=1-\frac{a\lambda}{2}\left(\frac{\Delta_{+}u_{j}^{n}}{\Delta_{-}u_{j}^{n}}+1\right)$
and $\displaystyle \beta^{+}
=\frac{a\lambda}{2}\left(\frac{\Delta_{+}u_{j}^{n}}{\Delta_{-}u_{j}^{n}}+1\right)$.
Note that $\alpha +\beta =1$, thus to ensure non-oscillatory stable
approximation such that $u_{j-1}^{n}\leq u_{j}^{n+1} \leq u_{j}^{n}$
by (\ref{ftcs4}), it is sufficient that
\begin{equation}
\alpha^{+}=
1-\frac{a\lambda}{2}\left(\frac{\Delta_{+}u_{j}^{n}}{\Delta_{-}u_{j}^{n}}+1\right)
\geq 0,\;\mbox{and}\; \beta^{+}
=\frac{a\lambda}{2}\left(\frac{\Delta_{+}u_{j}^{n}}{\Delta_{-}u_{j}^{n}}+1\right)\geq
0. \label{inq1}
\end{equation}
Inequalities (\ref{inq1}) satisfies if,
$$ \frac{\Delta_{+}u_{j}^{n}}{\Delta_{-}u_{j}^{n}} \leq \frac{2-a\lambda}{a\lambda} \; \mbox{and}\; \frac{\Delta_{+}u_{j}^{n}}{\Delta_{-}u_{j}^{n}}\geq -1.$$
Which on inversion yield non-oscillatory condition for FTCS scheme (\ref{ftcs1}) in case of $a> 0$,
\begin{equation}
\frac{\Delta_{-}u_{j}^{n}}{\Delta_{+}u_{j}^{n}} \leq -1 \; \mbox{OR}\; \frac{\Delta_{-}u_{j}^{n}}{\Delta_{+}u_{j}^{n}}\geq \frac{a\lambda}{2-a\lambda}\label{inq1}
\end{equation} 
\item{\bf Let $a<0$}, then (\ref{ftcs2}) can be written in upwind
  stencil form as,
\begin{equation}
u_{j}^{n+1}= \alpha^{-}\, u_{j}^{n} + \beta^{-}\, u_{j+1}^{n}. \label{ftcs5}
\end{equation}
where $\displaystyle \alpha^{-}= \left(1+\frac{a\lambda}{2}\left(1+ \frac{\Delta_{-}u_{j}^{n}}{\Delta_{+}u_{j}^{n}}\right)\right)$ and
$\displaystyle \beta^{-}= \frac{-a\lambda}{2}\left(1+ \frac{\Delta_{-}u_{j}^{n}}{\Delta_{+}u_{j}^{n}}\right)$.  Similar to above,
(\ref{ftcs5}) ensures for a non-oscillatory approximation provided 
\begin{equation}\left(\frac{a\lambda}{2}\left(1+
\frac{\Delta_{-}u_{j}^{n}}{\Delta_{+}u_{j}^{n}}\right)\right)\geq -1\;
\mbox{and} \frac{-a\lambda}{2}\left(1+
\frac{\Delta_{-}u_{j}^{n}}{\Delta_{+}u_{j}^{n}}\right)\geq 0
.\label{inq4}\end{equation}

Note that $a\lambda<0$, therefore inequalities in (\ref{inq4}) satisfy if
$$\left(1+\frac{\Delta_{-}u_{j}^{n}}{\Delta_{+}u_{j}^{n}}\right)\leq \frac{-2}{a\lambda}\;
\mbox{and}\;1+\frac{\Delta_{-}u_{j}^{n}}{\Delta_{+}u_{j}^{n}}\geq 0$$
or
$$\left(\frac{\Delta_{-}u_{j}^{n}}{\Delta_{+}u_{j}^{n}}\right)\leq \frac{-(2+a\lambda)}{a\lambda}\;
\mbox{and}\;\frac{\Delta_{-}u_{j}^{n}}{\Delta_{+}u_{j}^{n}}\geq -1$$
Since $\frac{-(2+a\lambda)}{a \lambda}> 0$, thus on inversion compound inequality satisfies if,
\begin{equation}
\frac{\Delta_{+}u_{j}^{n}}{\Delta_{-}u_{j}^{n}}\leq -1 \; \mbox{OR}\;
\frac{\Delta_{+}u_{j}^{n}}{\Delta_{-}u_{j}^{n}} \geq
\frac{-a\lambda}{(2+a\lambda)}\label{inq3}
\end{equation} 
\end{itemize}
\end{proof}
The non-oscillatory stable region $\mathcal{S}_{FTCS}$ given by
inequalities (\ref{inq1}) is given for $a>0$ in Figure (\ref{region})
which clearly shows the effect of CFL number on the stability of FTCS.
\subsection{Non-oscillatory bound for non-linear scalar problem}
Consider the non-linear scalar hyperbolic conservation laws
\begin{equation}
u(x,t)_{t} + g(u(x,t))_{x}=0,\; u(x,0)=u_{0}(x).  \label{nonlineq}
\end{equation}
The characteristic speed $g'(u)$ associated with (\ref{nonlineq}) at
interface $x_{j+\frac{1}{2}}$ of the cell $[x_{j},x_{j+1}]$ can be approximated by 
\begin{equation}
\alpha_{j+\frac{1}{2}} =\left\{\begin{array}{cc}
\frac{\Delta_{+}g_{j}^{n}}{\Delta_{+}u_{j}^{n}} & \mbox{if}\;
\Delta_{+}u_{j}^{n}\neq 0,\\ g'_{j} & else. \end{array}\right. \label{spd}
\end{equation}
For (\ref{nonlineq}), FTCS can be written as 
\begin{equation}
u_{j}^{n+1}=u_{j}^{n}-\frac{k}{2h}\left(g_{j+1}^{n}-g_{j-1}^{n}\right). \label{nlftcs1}
\end{equation}
where $g_{j}^{n}=g(u_{j}^{n})$. Which using (\ref{spd}) can be written as  
\begin{equation}
u_{j}^{n+1}=u_{j}^{n}-\frac{k}{2h}\left(a_{j+\frac{1}{2}}u_{j+1}^{n}-a_{j-\frac{1}{2}}u_{j-1}^{n}\right). \label{nlftcs2}
\end{equation}
In the non-sonic region, following result give bounds for
non-oscillatory approximation of non-linear scalar problem
(\ref{nonlineq}) by FTCS.
\begin{theorem}\label{thm2}
Away from the sonic point i.e., where $a_{j-\frac{1}{2}}\times
a_{j+\frac{1}{2}}>0 $, the FTCS scheme (\ref{nlftcs2}) is stable and
does not introduce numerical oscillations in the solution region 
\begin{equation}
\displaystyle \theta_{i}^{n} \in \mathcal{S}_{FTCS} =
\left\{\begin{array}{lc}\left(\infty,
-\frac{\alpha_{j+{1}{2}}}{\alpha_{j-{1}{2}}}\right]\cup\left[\frac{\lambda
      \alpha_{j+{1}{2}}}{2-\lambda \alpha_{j-{1}{2}}}, \infty\right) &
    \mbox{if}\; g'(u)>0,\\ & \\
\left(\infty,
-\frac{\alpha_{j-{1}{2}}}{\alpha_{j+{1}{2}}}\right]\cup\left[-\frac{\lambda
      \alpha_{j-{1}{2}}}{2+\lambda \alpha_{j+{1}{2}}}, \infty\right) &
    \mbox{if}\; g'(u)<0.
\end{array}
\right.
\end{equation}
provided $\lambda \max_{u}|g'(u)|\leq 1$.
\end{theorem}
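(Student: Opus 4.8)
The plan is to reduce the nonlinear scheme (\ref{nlftcs2}) to exactly the two-point convex-combination argument already used for Theorem \ref{thm1}, with the constant speed $a$ replaced by the local interface speeds $a_{j\pm\frac{1}{2}}$. First I would write the flux-form FTCS (\ref{nlftcs1}) as
\[
u_{j}^{n+1}=u_{j}^{n}-\frac{\lambda}{2}\left(\Delta_{+}g_{j}^{n}+\Delta_{-}g_{j}^{n}\right),
\]
and then invoke the definition (\ref{spd}), which gives $\Delta_{+}g_{j}^{n}=a_{j+\frac{1}{2}}\Delta_{+}u_{j}^{n}$ and $\Delta_{-}g_{j}^{n}=a_{j-\frac{1}{2}}\Delta_{-}u_{j}^{n}$. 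This is the key linearization: it turns the scheme into
\[
u_{j}^{n+1}=u_{j}^{n}-\frac{\lambda}{2}\left(a_{j+\frac{1}{2}}\,\Delta_{+}u_{j}^{n}+a_{j-\frac{1}{2}}\,\Delta_{-}u_{j}^{n}\right),
\]
which is structurally identical to (\ref{ftcs2}) except that the two one-sided differences now carry different coefficients.

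For $g'(u)>0$, away from the sonic point both $a_{j+\frac{1}{2}}$ and $a_{j-\frac{1}{2}}$ are positive, so I would factor out $\Delta_{-}u_{j}^{n}=u_{j}^{n}-u_{j-1}^{n}$ to obtain the upwind form $u_{j}^{n+1}=\alpha^{+}u_{j}^{n}+\beta^{+}u_{j-1}^{n}$ with $\alpha^{+}+\beta^{+}=1$ and
\[
\beta^{+}=\frac{\lambda}{2}\left(a_{j+\frac{1}{2}}\,\frac{\Delta_{+}u_{j}^{n}}{\Delta_{-}u_{j}^{n}}+a_{j-\frac{1}{2}}\right).
\]
Exactly as in the proof of Theorem \ref{thm1}, demanding $\alpha^{+}\ge0$ and $\beta^{+}\ge0$ makes $u_{j}^{n+1}$ a genuine convex combination lying between $u_{j-1}^{n}$ and $u_{j}^{n}$, and it confines the ratio $r=\Delta_{+}u_{j}^{n}/\Delta_{-}u_{j}^{n}$ to the bounded interval $\left[-a_{j-\frac{1}{2}}/a_{j+\frac{1}{2}},\,(2-\lambda a_{j-\frac{1}{2}})/(\lambda a_{j+\frac{1}{2}})\right]$. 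The CFL bound $\lambda\max_{u}|g'(u)|\le1$ (together with $a_{j-\frac{1}{2}}$ being a secant slope of $g$, hence bounded by $\max_{u}|g'(u)|$) guarantees $2-\lambda a_{j-\frac{1}{2}}>0$, so the upper endpoint is positive while the lower endpoint is negative. Inverting this compound inequality for $\theta_{j}^{n}=1/r$ then produces precisely the $g'(u)>0$ branch of $\mathcal{S}_{FTCS}$.

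The case $g'(u)<0$ is handled symmetrically: both speeds are now negative, the upwind direction reverses, and I would instead factor out $\Delta_{+}u_{j}^{n}=u_{j+1}^{n}-u_{j}^{n}$ to get $u_{j}^{n+1}=\alpha^{-}u_{j}^{n}+\beta^{-}u_{j+1}^{n}$, with $\beta^{-}=-\frac{\lambda}{2}\bigl(a_{j+\frac{1}{2}}+a_{j-\frac{1}{2}}\Delta_{-}u_{j}^{n}/\Delta_{+}u_{j}^{n}\bigr)$. The two positivity requirements again trap $\Delta_{-}u_{j}^{n}/\Delta_{+}u_{j}^{n}$ in a bounded interval whose endpoints, after using $2+\lambda a_{j+\frac{1}{2}}>0$ (again from the CFL bound, since here $a_{j+\frac{1}{2}}<0$), invert to the second branch of $\mathcal{S}_{FTCS}$.

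The routine part — factoring and solving the two linear inequalities — is immediate; the step needing care is the sign bookkeeping in the inversion. Whether the bounded interval for the gradient ratio straddles the origin, and hence whether its reciprocal becomes the union of two half-lines appearing in $\mathcal{S}_{FTCS}$, depends on both interface speeds sharing a single sign, which is exactly the content of the non-sonic hypothesis $a_{j-\frac{1}{2}}a_{j+\frac{1}{2}}>0$; if a sonic point separated the stencil speeds, the factorization would no longer yield a true convex combination and the argument would collapse. I therefore expect the main obstacle to be carrying out the inversion cleanly while tracking the signs of $a_{j\pm\frac{1}{2}}$ and verifying at each step that the CFL condition keeps the denominators $2\mp\lambda a_{j\mp\frac{1}{2}}$ strictly positive.
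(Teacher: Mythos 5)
Your proposal is correct and is exactly the argument the paper intends: its entire proof of Theorem \ref{thm2} is the single line ``Analogous to the proof of Theorem (\ref{thm1})'', and your reduction --- secant linearization $\Delta_{\pm}g_{j}^{n}=a_{j\pm\frac{1}{2}}\Delta_{\pm}u_{j}^{n}$, upwind factorization into a two-point combination with $\alpha+\beta=1$, nonnegativity of both coefficients, and sign-tracked inversion of the resulting interval for the gradient ratio --- carries out that analogy faithfully and reproduces both branches of $\mathcal{S}_{FTCS}$. If anything, your write-up supplies details the paper omits, including the correct flux-difference reading of (\ref{nlftcs2}) and the observation that the CFL bound keeps the denominators $2-\lambda a_{j-\frac{1}{2}}$ and $2+\lambda a_{j+\frac{1}{2}}$ strictly positive.
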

\begin{proof}
Analogous to the proof of Theorem (\ref{thm1}). 
\end{proof}
\section{Numerical Results}
\subsection{Linear Case}
Consider linear transport equation (\ref{eqsec1}) along with the
following initial conditions
\begin{equation} u_{0}(x)=\sin(\pi\,x),\; x\in [-1:1] \label{IC1} \end{equation} 
\begin{equation} 
u_{0}(x)=\left\{\begin{array}{cc} 1 &\;\mbox{if}\; |x|\leq 1/3\\ 0 &\;\mbox{else}\;\end{array}\right., x\in [-1:1] \label{IC2}
\end{equation}
\begin{equation} 
u_{0}(x)=\left\{\begin{array}{cc} \exp\left(\frac{-1}{1-x^2}\right) &\;\mbox{if}\; |x|\leq 1\\ 0 & \;\mbox{else}\;\end{array}\right. x\in [-2:4]  \label{IC3}
\end{equation}
In Figure \ref{Fig1}, numerical results corresponding to initial
condition (\ref{IC1}) are given to show the effect of CFL number on
induced oscillations by FTCS. It can be clearly seen that numerical
oscillations disappear for small CFL number. This supports the result
in Theorem \ref{thm1} as $C\rightarrow 0$ reduce the region of
oscillations of $\theta$ to (-1,0) as shown in Figure \ref{region}.

In order to show data region which cause induced oscillations by FTCS,
the following hybrid approach using first order
non-oscillatory upwind scheme is used {\bf to show},
\begin{itemize}
\item {\it FTCS does not introduce oscillations for data region $
    \mathcal{S}_{FTCS}$:} Use upwind scheme if $\theta^{n}_{j} \notin (-1,
  \frac{2-C}{C})$ other wise FTCS is used. Results obtained by this approach (using legend 'FTCSUP')
  in Figure \ref{Fig2}(b) and Figure \ref{Fig3}(a) show that
  oscillations does not arise is shown as FTCSUP.
\item {\it FTCS introduces oscillations only for data region
    $\mathbb{R}\setminus\mathcal{S}_{FTCS}$:} If $\theta^{n}_{j} \in
  (-1, \frac{2-C}{C})$ use FTCS, other wise first order upwind
  scheme is used. Results obtained by this approach in Figure
  \ref{Fig2}(c) and Figure \ref{Fig3}(b) (using legend 'FTUPCS')
  show occurrence of spurious oscillations by FTUPCS.
\end{itemize}
\subsection{Nonlinear scalar}
Consider the non-linear Burgers equations
\begin{equation}
u_{t} + (\frac{u^2}{2})_{x}=0,\label{burgerseq}
\end{equation}
with initial conditions 
\begin{equation}
u_{0}(x)=\left\{\begin{array}{cc}1 & \mbox{if}\;x\leq 0.5\\0 &
x>0.5,\; x\in [0,1].\setminus{1}\end{array}\right.\label{test1IC}
\end{equation}
\begin{equation}
u_{0}(x)=\left\{\begin{array}{cc}1 & \mbox{if}\;x=1\\0, & x\in [0,2]\setminus{1}.\end{array}\right.\label{test2IC}
\end{equation}
In Figure \ref{burgerFig1} and \ref{burgerFig2} numerical results are
given corresponding to initial condition (\ref{test1IC}) and
(\ref{test2IC}) respectively. Similar to the linear case, again
results by FTCS and FTUPCS are oscillatory whereas results by FTCSUP
does not show any numerical oscillations. These results confirm that
FTCS is conditionally stable and introduce the oscillations only in the
solution data region where $\theta \in \mathcal{S}_{FTCS}\setminus
\mathbb{R}$

\section{Conclusion} Non-oscillatory stability bounds on initial data profile are given and
numerically tested for unconditionally unstable (in Von-Neumann sense)
FTCS scheme. An extension of this preliminary study for other existing
schemes for non-linear hyperbolic problem is being carried out for a
separate work.
%\bibliographystyle{elsart-num-sort}
%\bibliographystyle{siam}
%\bibliography{myb}

\newpage
\begin{figure}
\begin{center}
\includegraphics[scale=0.5]{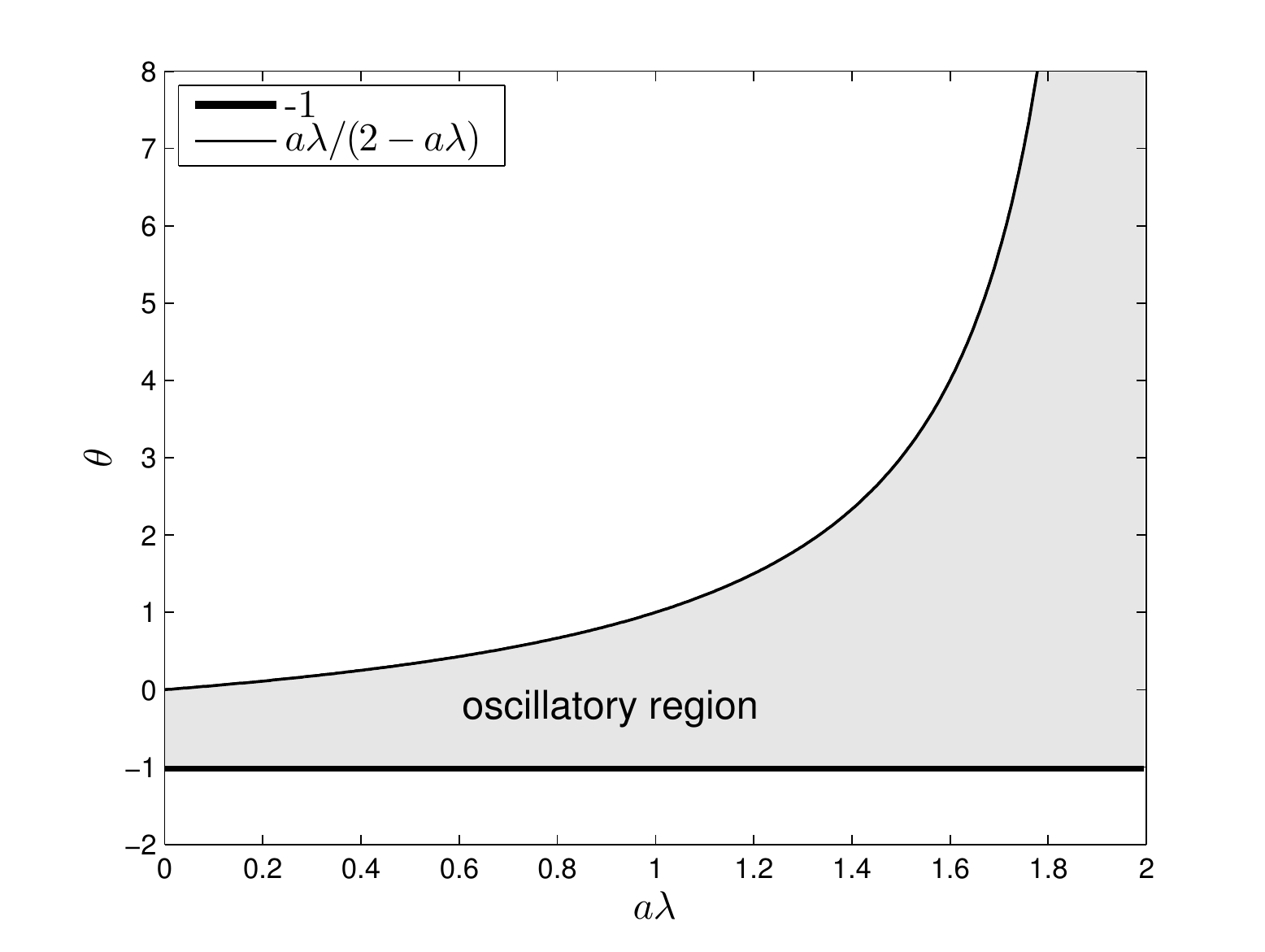}
\end{center}
\caption{\label{region} Plot $\theta$ v/s $a\lambda, \;a=1$: $a\lambda
  \rightarrow 0$ stabilizes the FTCS as it corresponds to reduced
  oscillatory shaded region.}
\end{figure}

\begin{figure}
\begin{center}
\begin{tabular}{ccc}
  \includegraphics[scale=0.35]{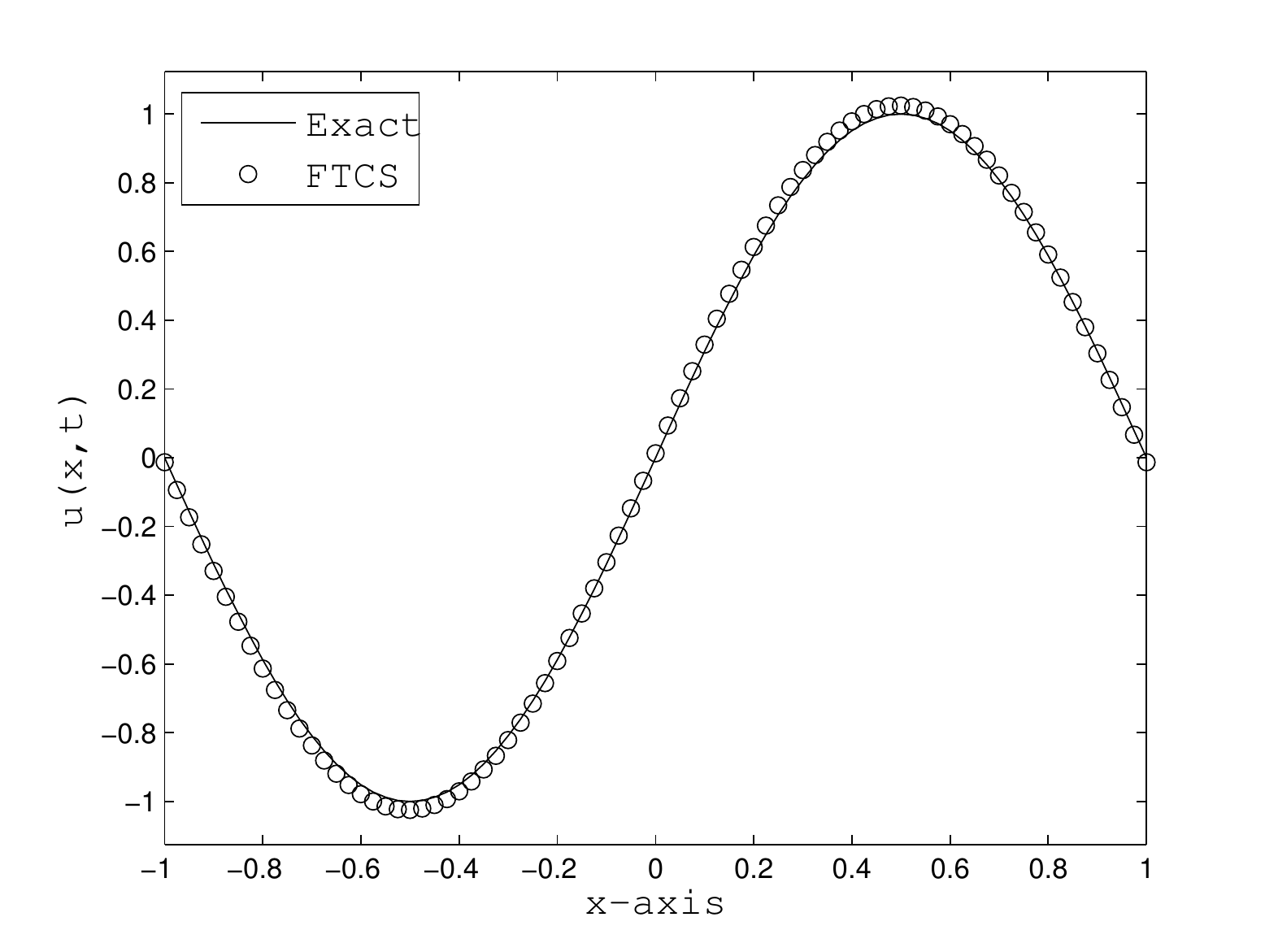}& \includegraphics[scale=0.35]{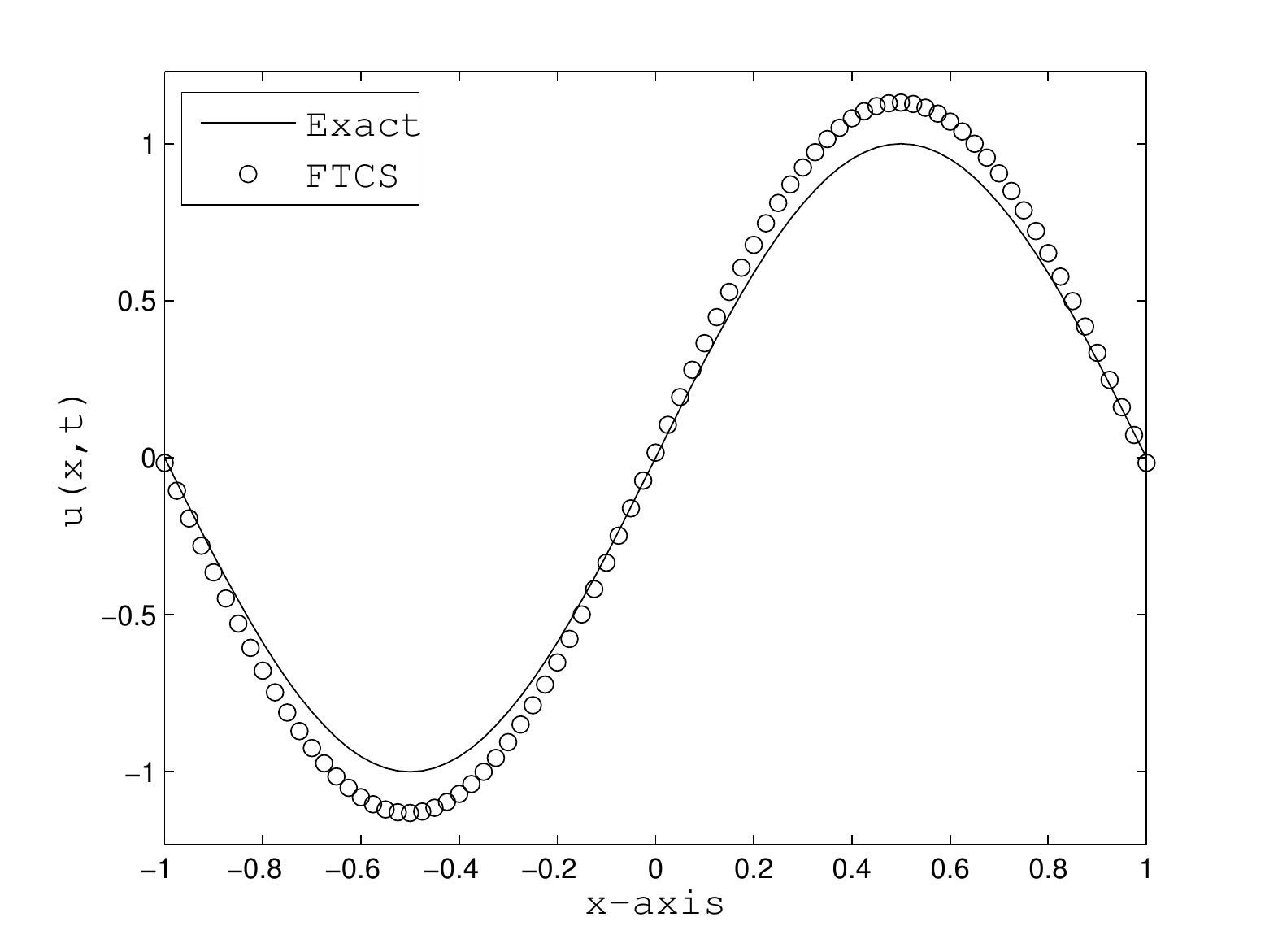}& \includegraphics[scale=0.35]{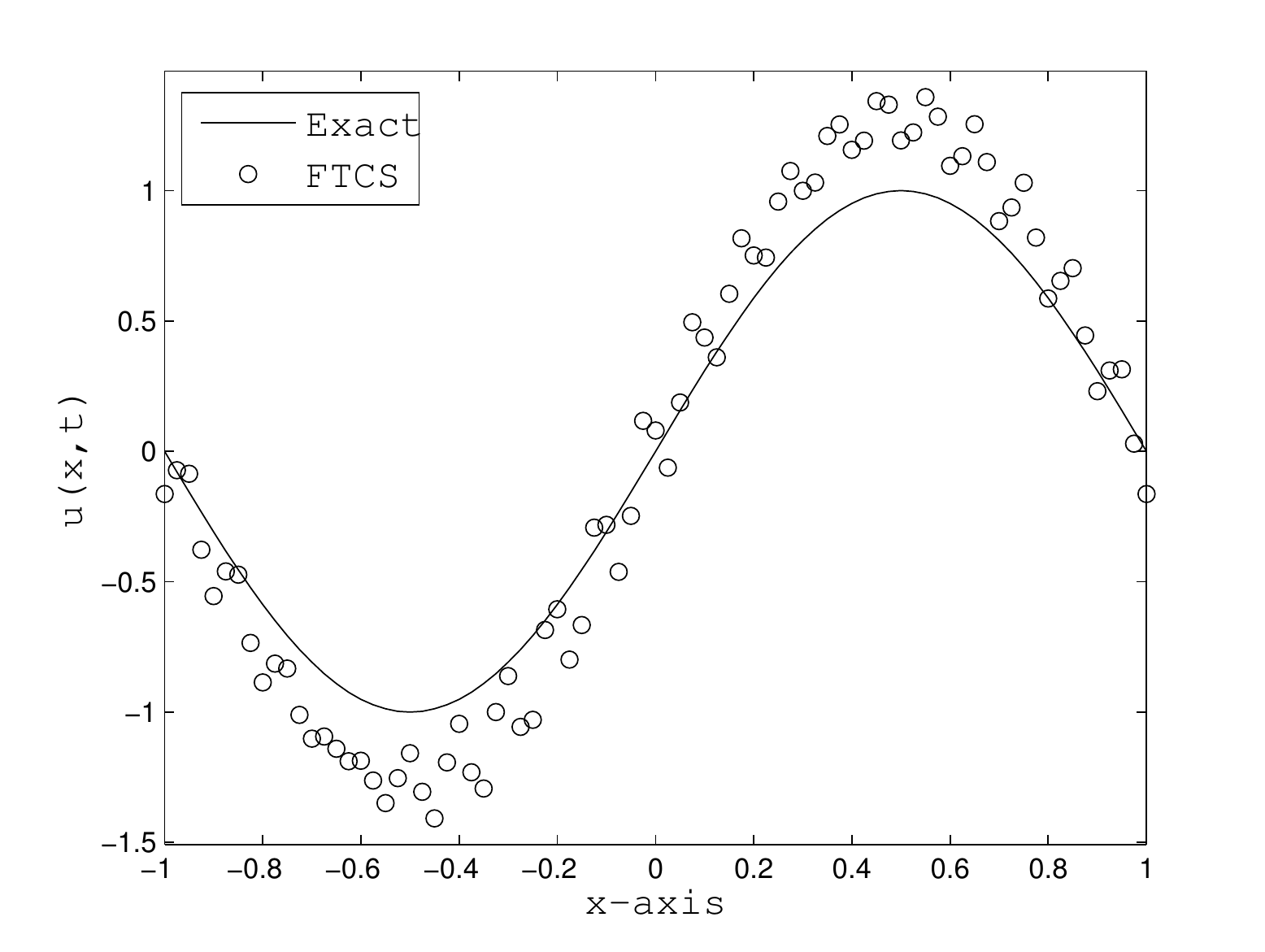}\\
  (a) & (b) & (c)
\end{tabular}
\caption{\label{Fig1} Effect of CFL on induced oscillations in the
  solution corresponding to (\ref{IC1}), at $T=4$ using $N=80$ grid
  points (a) $C=0.05$ (b) $C=0.25$ and (c) $C=0.5$.}
\end{center}
\end{figure}
\begin{figure}
\begin{center}
\begin{tabular}{ccc}
\includegraphics[scale=0.4]{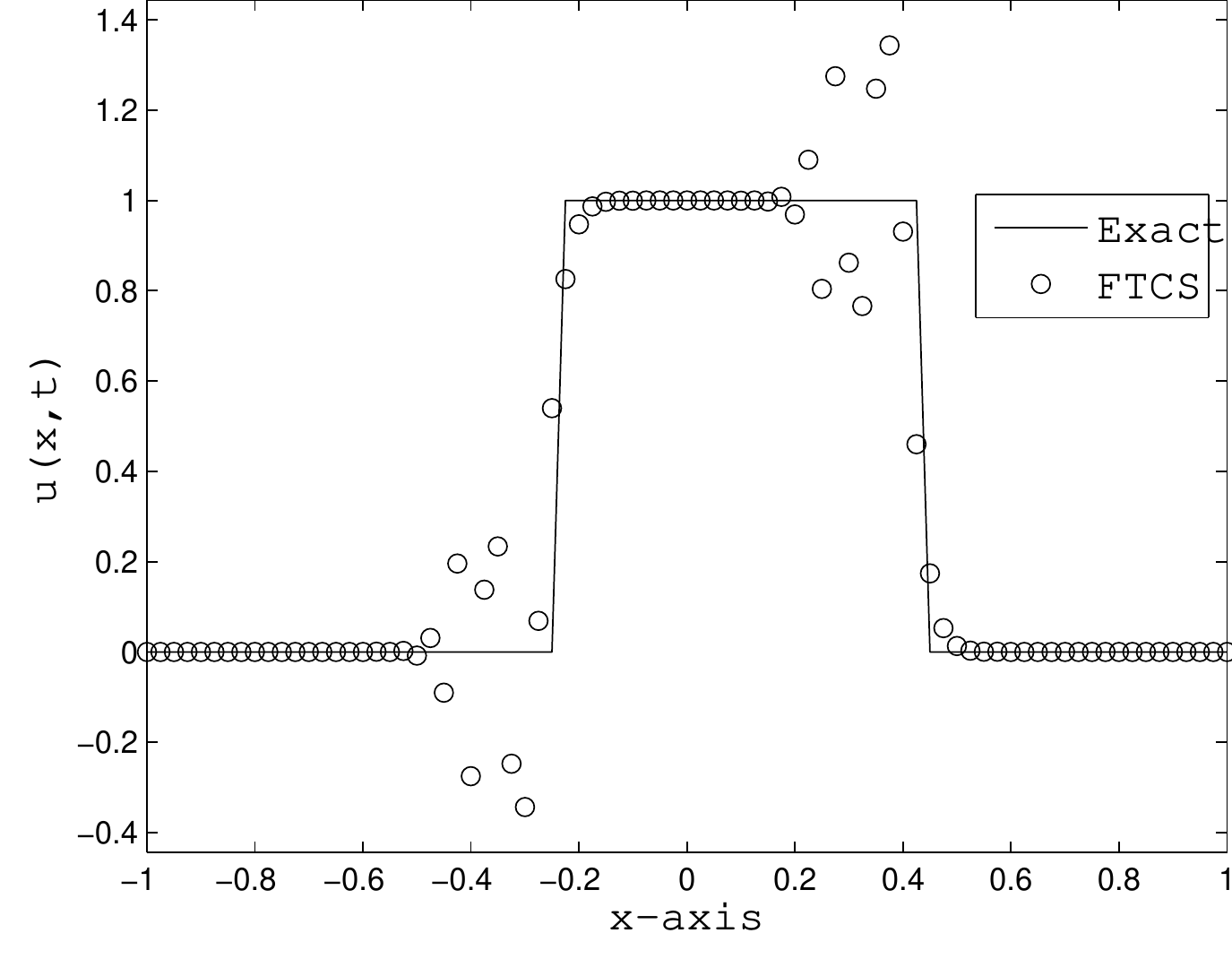}& \includegraphics[scale=0.4]{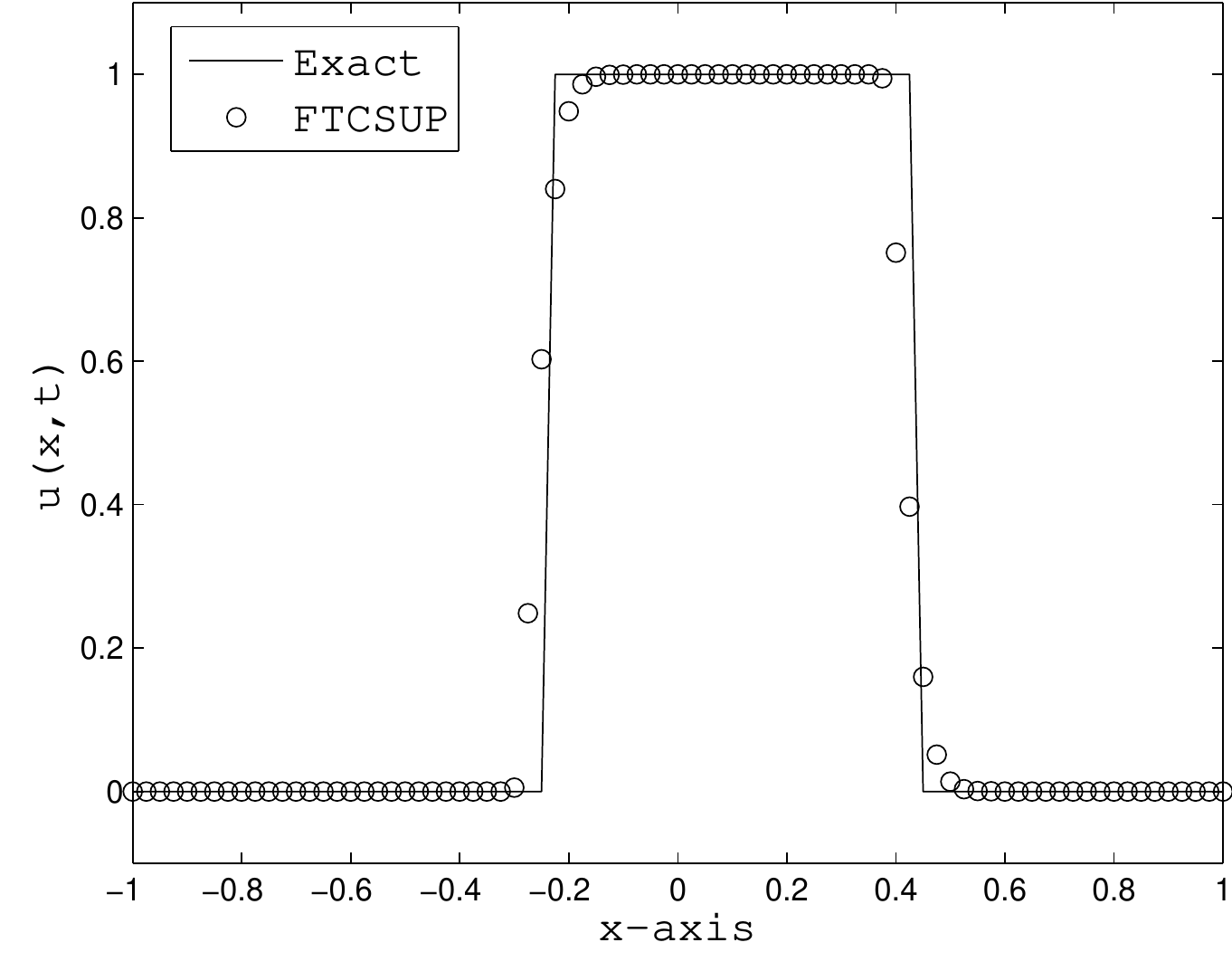}& \includegraphics[scale=0.4]{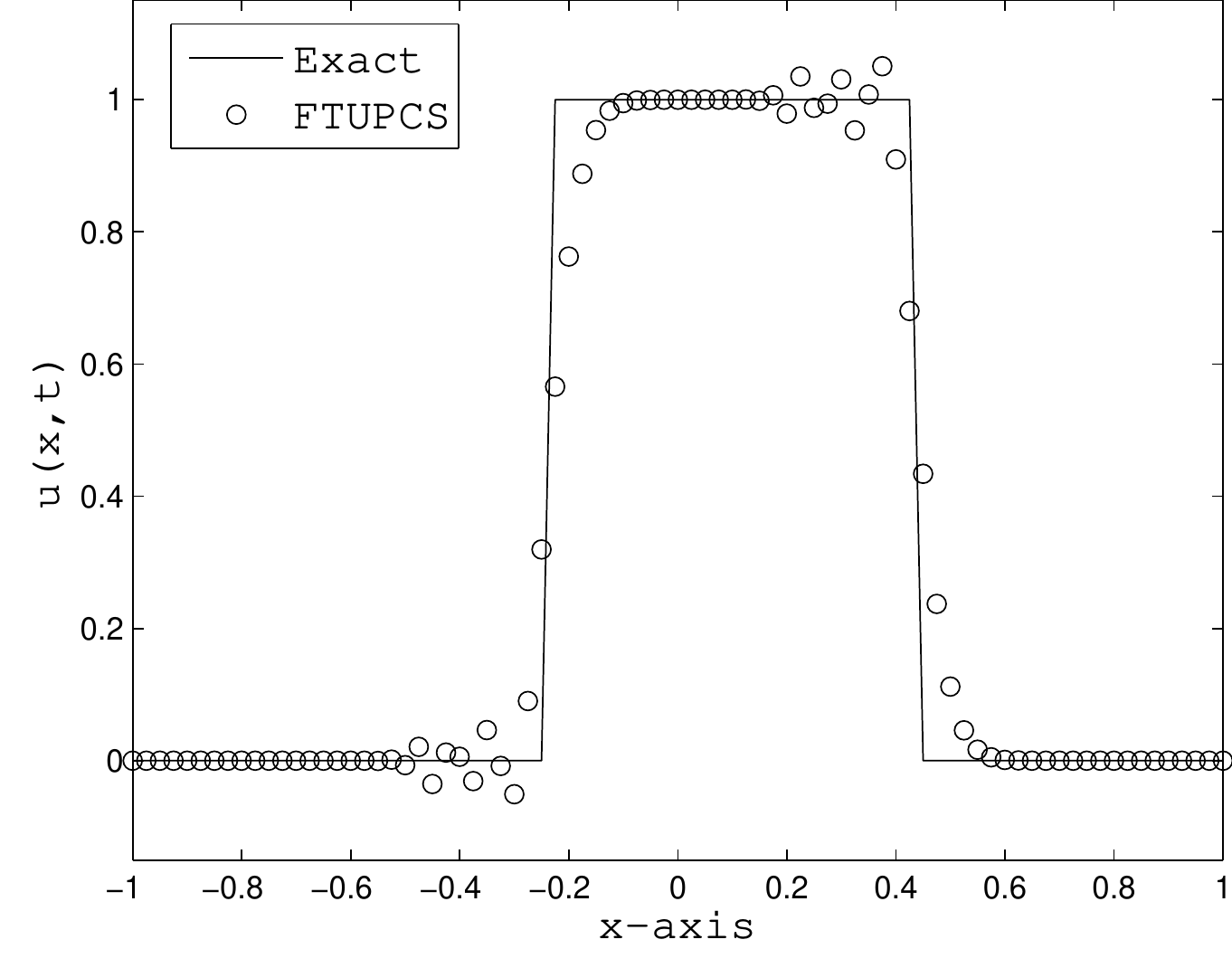}\\
(a) & (b) & (c)
\end{tabular}
\caption{\label{Fig2} Solution corresponding to (\ref{IC2}), 
  with $N=80$ grid point at $T=0.1$ and $C=0.1$, (a) FTCS does not introduce oscillations for top
  and bottom of left and right discontinuity respectively (b) No
  oscillations by FTCSUP, FTCS is applied only in its stability region
  and (c) Induced oscillations FTUPCS, FTCS is applied only in its
  unstability region.}
\end{center}
\end{figure}

\begin{figure}
\begin{center}
\begin{tabular}{cc}
\includegraphics[scale=0.4]{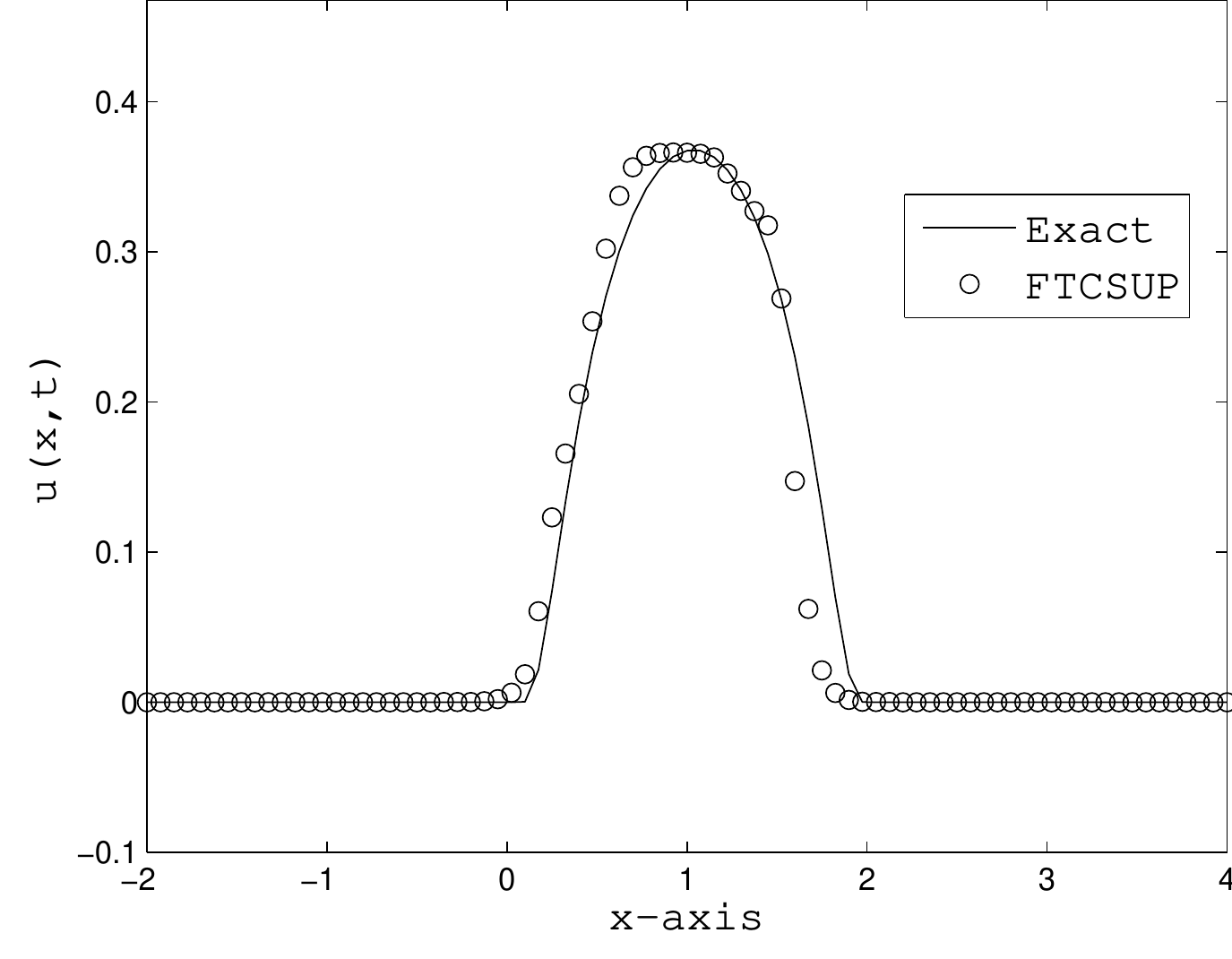}& \includegraphics[scale=0.4]{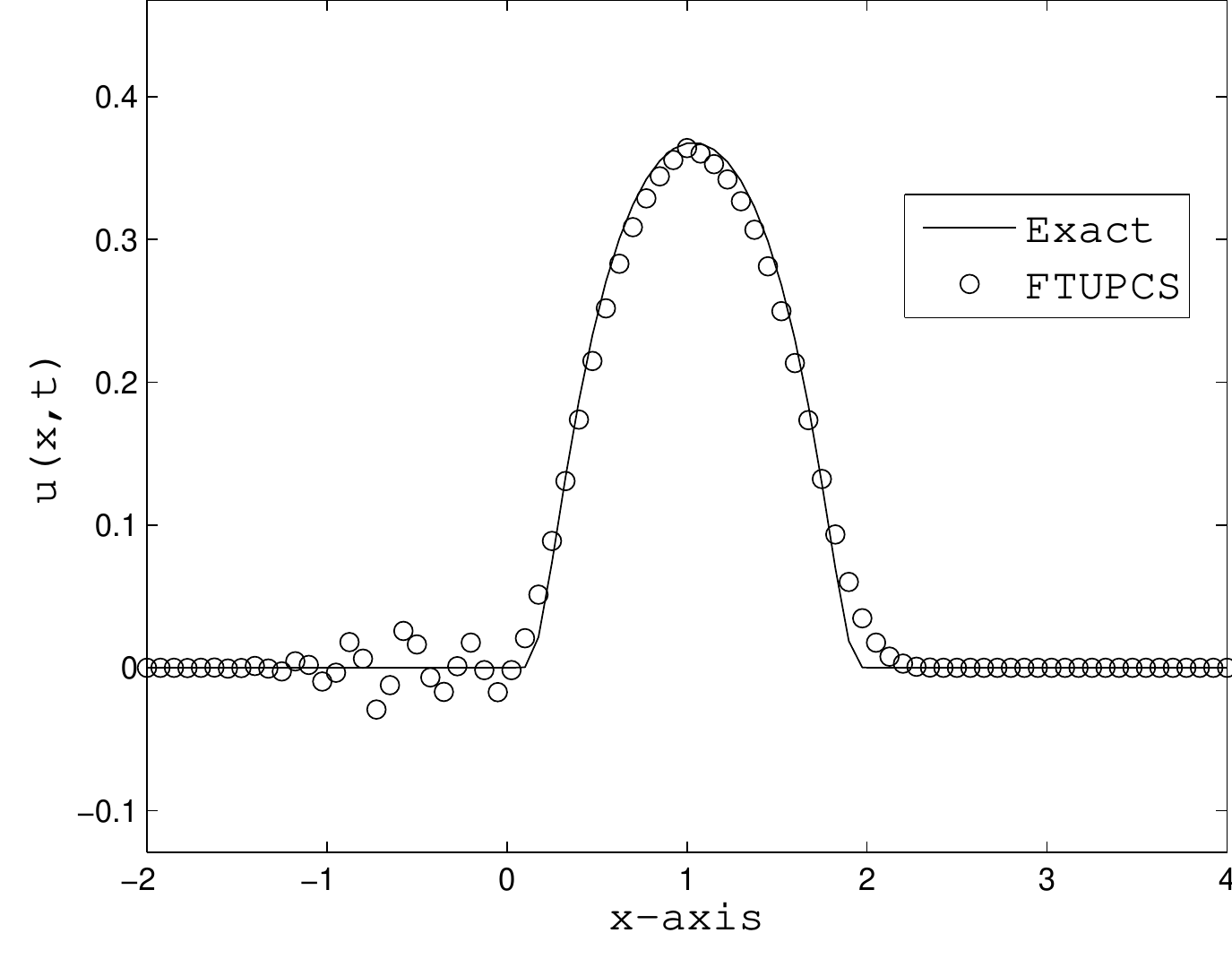}\\
(a) & (b)
\end{tabular}
\caption{\label{Fig3} Solution corresponding to (\ref{IC3}), at $T=1.0$ and $C=0.6$ (a) No
  oscillations by FTCSUP (b) Induced spurious oscillations FTUPCS, FTCS is applied only in its
  unstability region.} 
\end{center}
\end{figure}

\begin{figure}
\begin{center}
\begin{tabular}{ccc}
\includegraphics[scale=0.4]{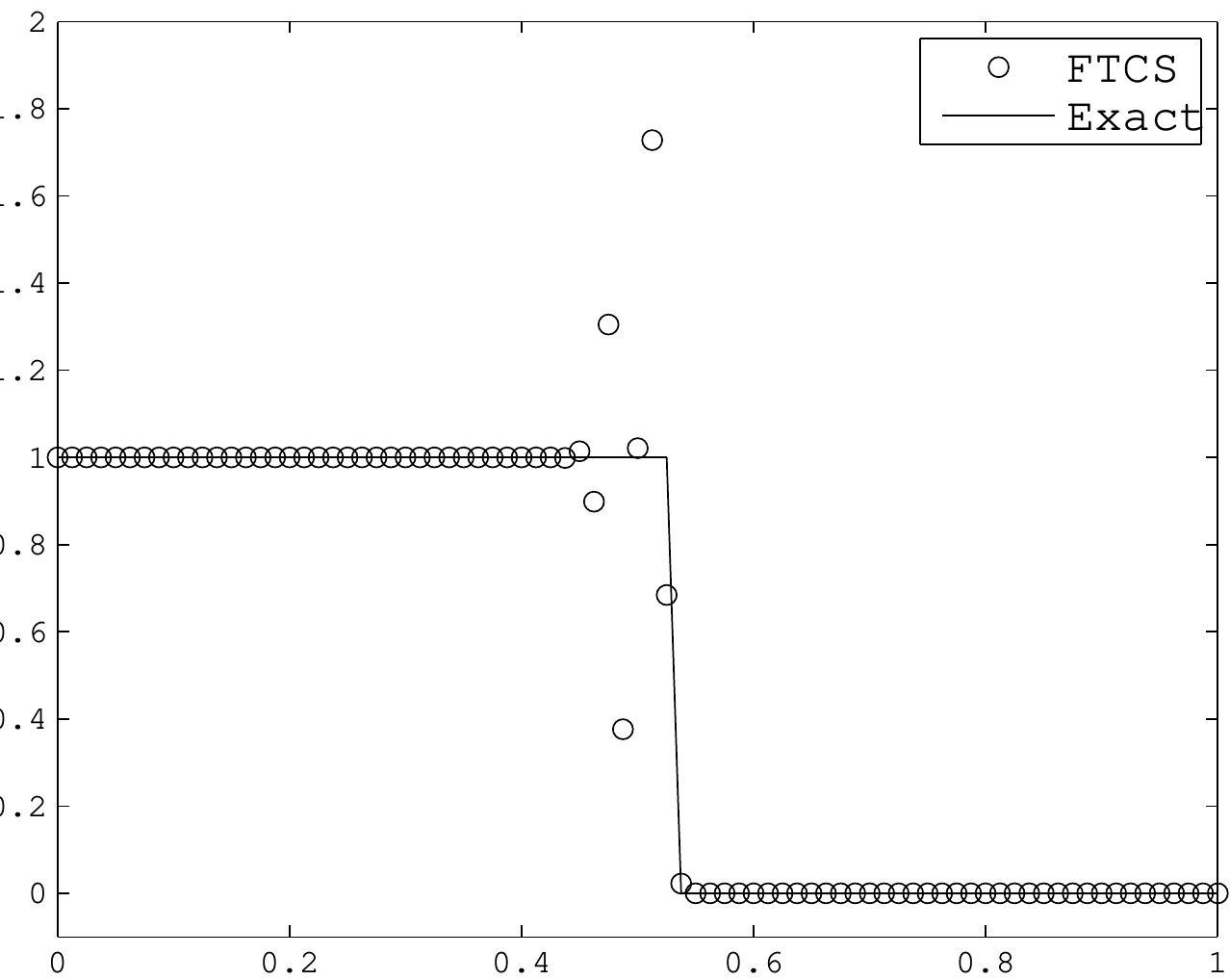}&\includegraphics[scale=0.4]{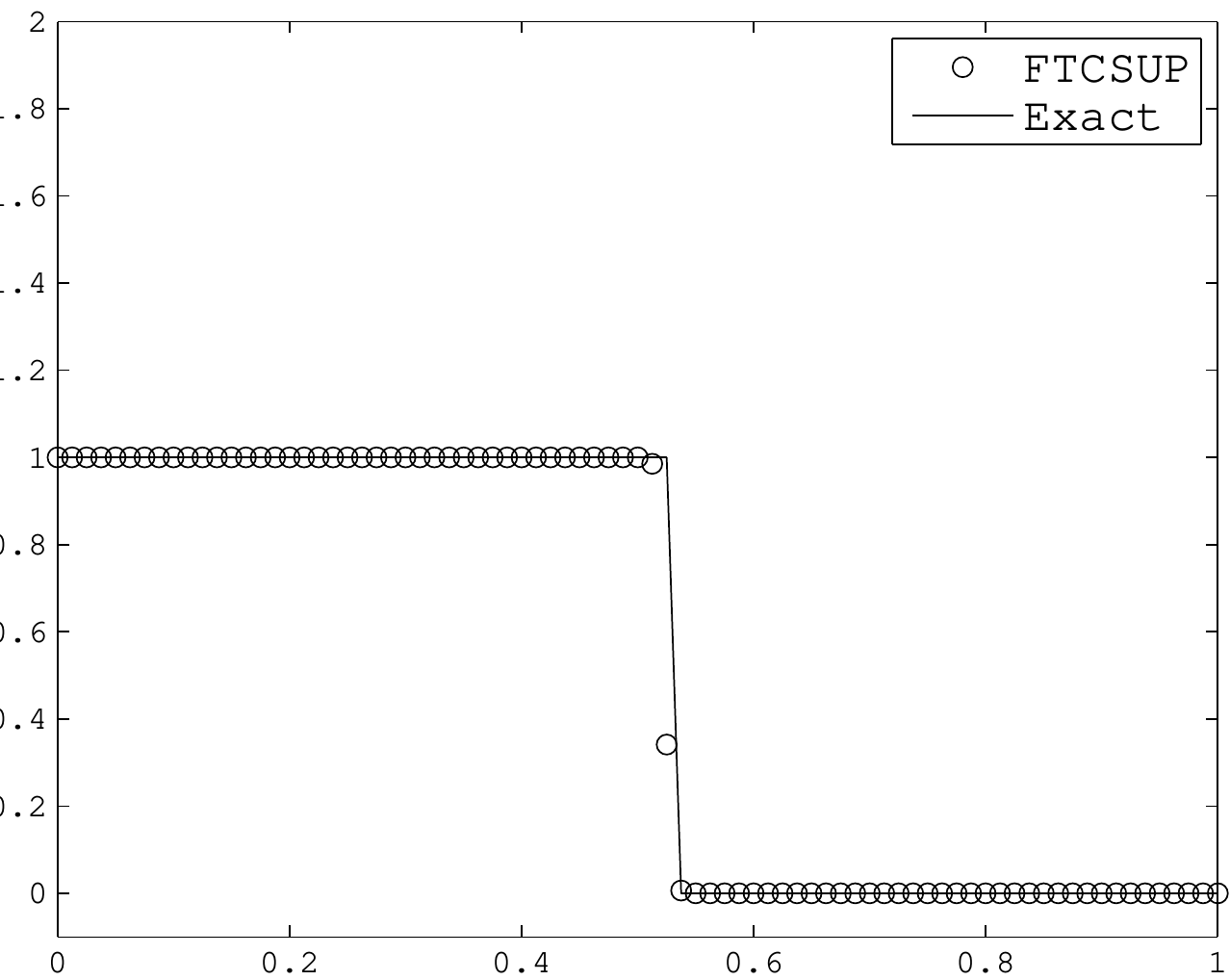} &\includegraphics[scale=0.4]{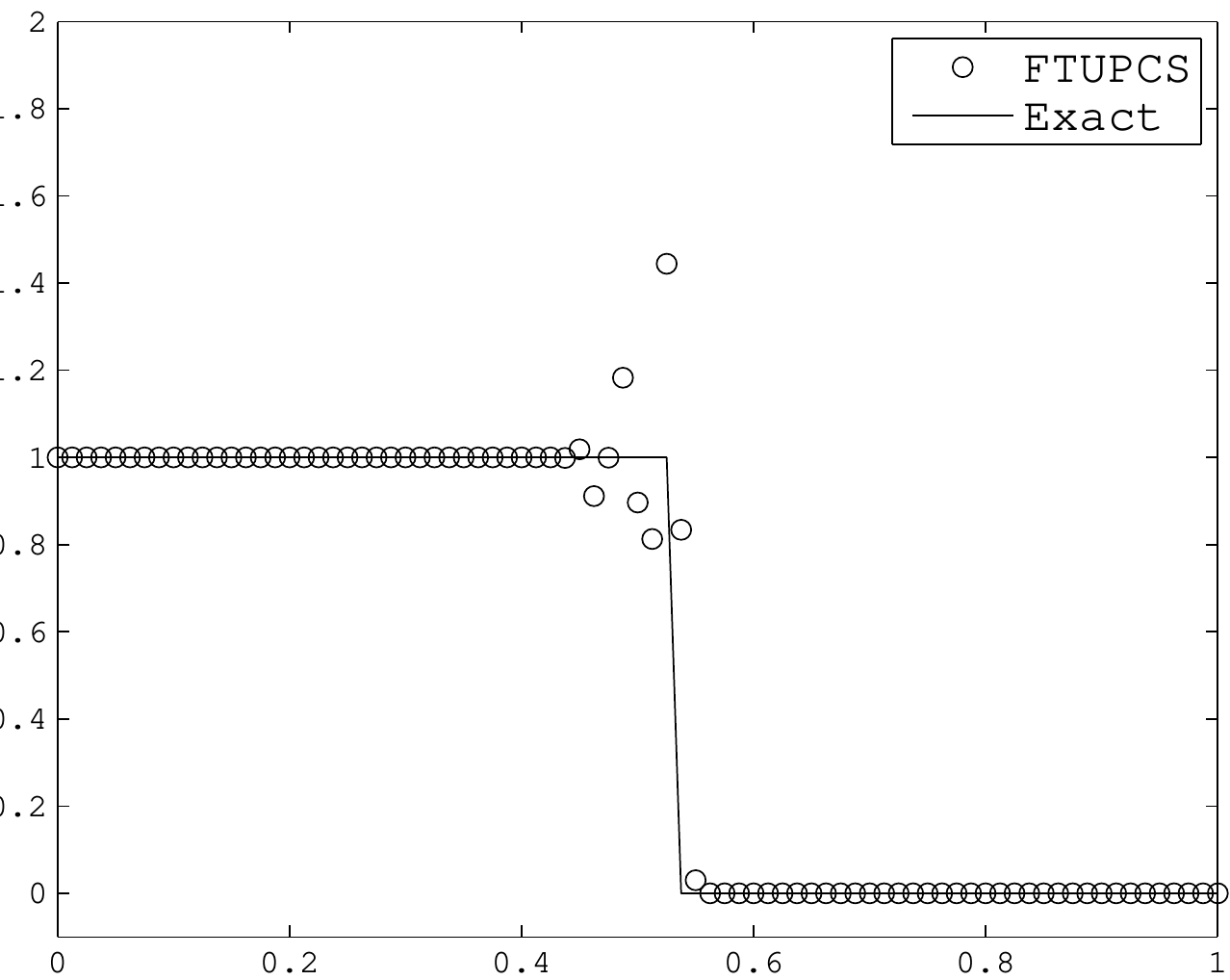}\\
(a) & (b) &(c)
\end{tabular}
\caption{\label{burgerFig1} Solution of Burgers equation corresponding to IC (\ref{test1IC}) using $CFL=0.9, N=80$, after 6 time steps at $T=0.05$}. 
\end{center}
\end{figure}
\begin{figure}
\begin{center}
\begin{tabular}{cc}
\includegraphics[scale=0.4]{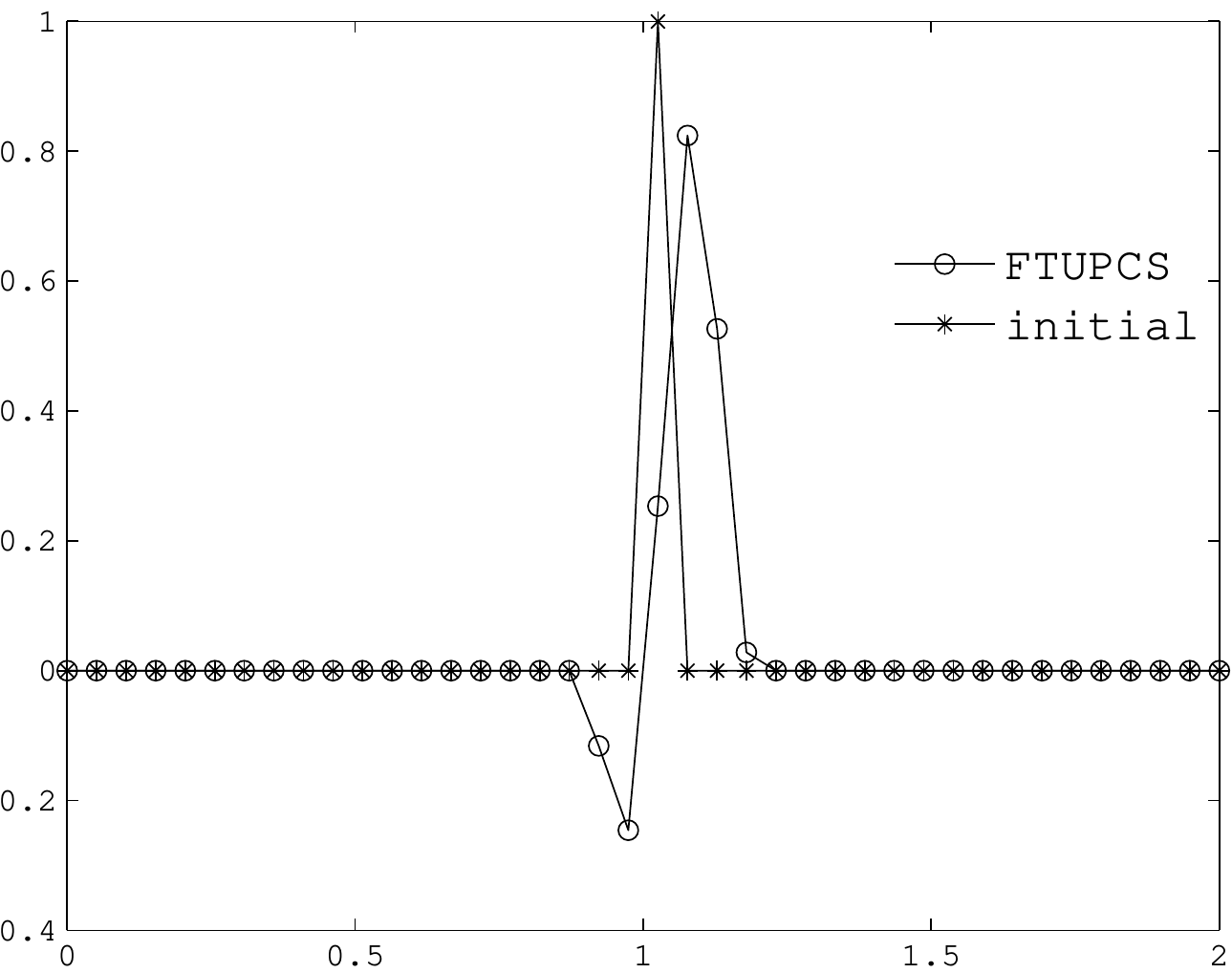}&\includegraphics[scale=0.4]{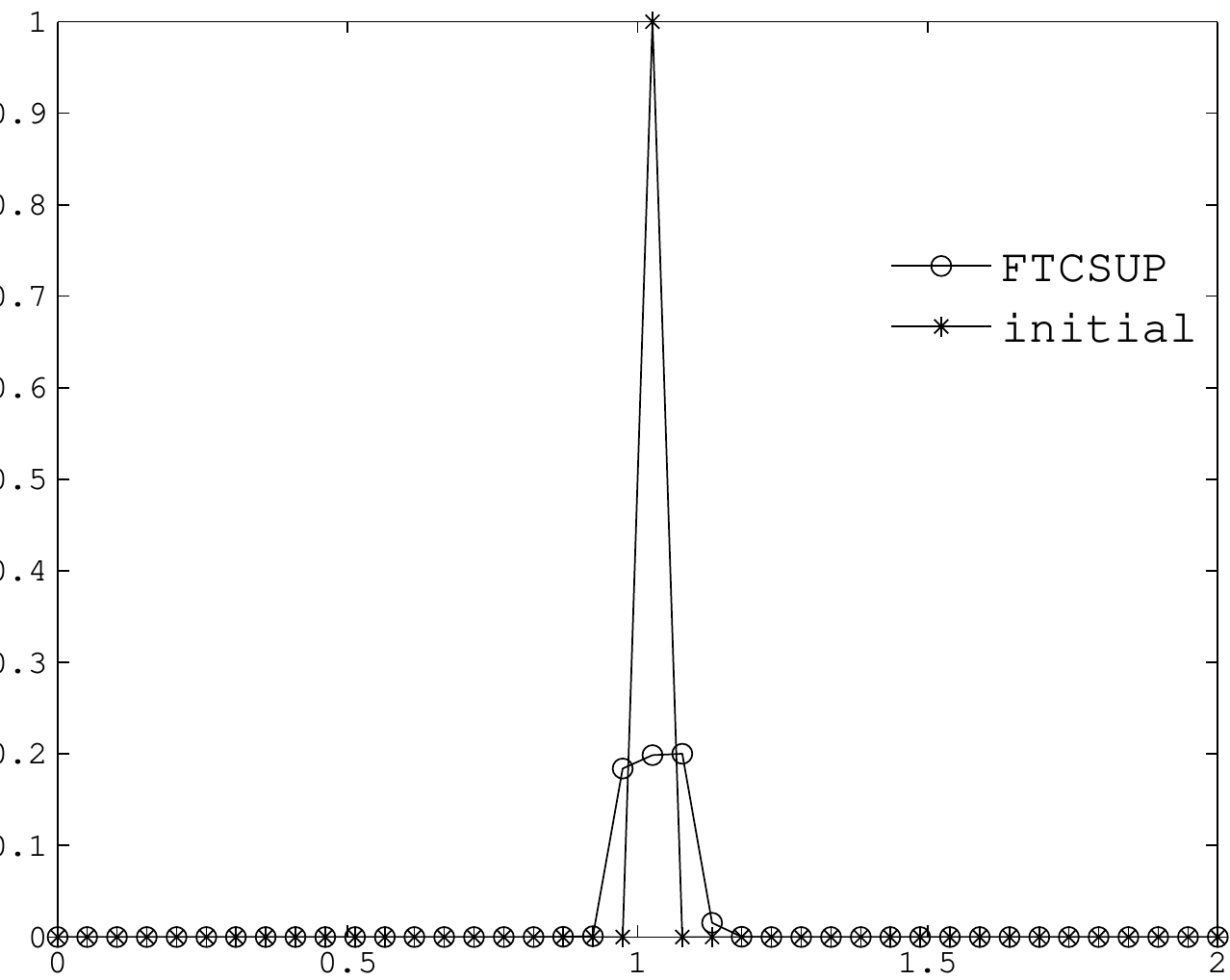}\\
(a) & (b) 
\end{tabular}
\caption{\label{burgerFig2} Solution of Burgers equation corresponding to IC (\ref{test1IC}) using $CFL=0.8, N=40$, after 3 time steps}. 
\end{center}
\end{figure}

\end{document}